\documentclass{ifacconf}

\usepackage{graphicx}      
\usepackage{natbib}        
\usepackage{cite}
\usepackage{amsmath,amssymb,amsfonts}
\usepackage{algorithmic}
\usepackage{algorithm,algorithmic}
\usepackage{mathtools,mathbbol,dsfont}
\usepackage{bm}
\usepackage{multirow}
\usepackage{threeparttable}
\usepackage{textcomp}
\usepackage[all]{xy}
\usepackage{chemarrow}
\usepackage{color}
\usepackage{makecell}
\usepackage{graphicx}
\usepackage{textcomp}
\usepackage{array} 
\usepackage{longtable}
\usepackage{booktabs}
\usepackage{float}
\usepackage{mathrsfs,array}
\makeatletter

\newcommand{\Rmnum}[1]{\expandafter\@slowromancap\romannumeral #1@}
\makeatother
\begin{document}
\begin{frontmatter}

\title{On the Convergence Rate Lower Bound of Biochemical Computational Modules } 



\author[First]{Yuzhen Fan} 
\author[Second]{Chuanhou Gao}
\author[First]{Shibo He}
\author[First]{Jiming Chen}

\address[First]{College of Control Science and Engineering, 
   Hangzhou, 310027, China (e-mail: yzfan@zju.edu.cn, s18he@zju.edu.cn, cjm@zju.edu.cn).}
\address[Second]{School of Mathematical Sciences, Hangzhou, 310058, China (e-mail: gaochou@zju.edu.cn)}


\begin{abstract}                
Biochemical reaction networks have become a central theoretical framework for implementing molecular computation. A key challenge is finite time computational accuracy, as computation outputs are encoded in limiting steady states (LSSs) of species concentrations while practical implementations operate for only finite time. This work proposes a concise characterization of convergence rate for biochemical computational modules with multiple output species, and rigorously establishes it as being bounded by the eigenvalue with largest (least negative) real part of the Jacobian matrix. Two numerical examples illustrate how the theoretical lower bound shapes the convergence rate range and reveals its dependence on reaction rate constants. This formulation enables systematic evaluation of biochemical computation speed and provides a practical design measure for constructing high-accuracy and error-controlled biochemical computational modules. 
\end{abstract}

\begin{keyword}
Biochemical reaction network, mass action system, convergence rate, lower bound, Jacobian matrix
\end{keyword}

\end{frontmatter}

\section{Introduction}
Interactions among biomolecules exhibit complicated dynamic behaviors that enable living cells to process multiple input signals and make decisions accordingly, which allow biological organisms to adapt to fluctuating environments and maintain functioning. Driven by both synthetic biology applications and the exploration of biological learning mechanisms, increasing theoretical \citep{vasic2022programming,anderson2021reaction,fan2025automatic,arredondo2022supervised,moorman2019dynamical,singh2019reaction} and experimental \citep{cherry2025supervised, chen2024synthetic,okumura2022nonlinear} efforts have been devoted to constructing engineered biological circuits capable of learning based on existing computational paradigms, especially artificial neural networks. Those facilitate the execution of complex computational and information processing tasks at the cellular and molecular levels.

Chemical reaction network (CRN) represents a widely recognized mathematical model for describing biomolecular interactions and serves as a theoretical bridge for the system level design of biological molecular circuits. Since CRNs can be implemented via DNA strand displacement reactions, there is growing interest in exploring CRNs as computational machines \citep{vasic2020crn++}, along with development in mathematical theory that are crucial for ensuring reliable design method and advancing refinement of theoretical design \citep{anderson2021reaction,anderson2025chemical,chalk2019composable,1fan2025automatic2}.
A fundamental theoretical challenge in regarding CRNs as a computational machine lies in the finite time computational accuracy problem\citep{anderson2025chemical,1fan2025automatic2}. Specifically, when using CRNs to perform a computational task, the output value is conventionally designed as the LSSs of some species that the system approaches asymptotically \citep{anderson2021reaction}. Since any practical implementation requires finite time operation of the system, it is important to bound the error, i.e., deviations between the actual concentrations recorded at the finite time and the desired computational values encoded by LSSs. Such errors can be propagated and amplified in large-scale designs, degrading the performance of the biochemical computational systems \citep{1fan2025automatic2}. 

Extending the recording time to improve accuracy is unsustainable as it compromises practicality and introduces additional issues. Therefore, for implementing given computational tasks while maintaining the desired accuracy using CRNs, it is essential to design the CRN system (structure and parameters) with faster convergence rate. This relies on a concise measure of the convergence rate, or equivalently the error decay rate, which is regarded as computation speed when using CRNs. Anderson first provided a quantitative definition on convergence rates for real-valued functions, which facilitates to evaluate biochemical computation modules with single output \citep{anderson2025chemical}. However, for multiple output cases with coupled dynamics (e.g. biochemical neural networks) or more complex single output designs for nonlinear operations such as computing Sigmoid function, it is difficult to directly evaluate computation speed through the definition. In this work, we present a characterization of the convergence rate lower bound for biochemical computational modules that is formulated for one whole mass action system (CRNs equipped with mass action kinetics) and linked to by the eigenvalue with largest (least negative) real part of its Jacobian matrix. Our results allow the biochemical computation speed to be estimated without explicit solution and provide a quantitative metric for error-controlled design, where higher computation speed directly implies higher computational accuracy within a given operation time.

The remainder of this article is organized as follows. Section \ref{sec:pre} covers preliminaries about CRNs and computation using CRNs. Section \ref{sec:convergence rate} develops the characterization of the convergence rate lower bound for both linear and nonlinear mass action systems with rigorous proofs. Two numerical examples are presented in Section \ref{sec:example} to illustrate our theoretical results.

\section{Preliminaries}\label{sec:pre}
In this section, we briefly introduce the definition of chemical reaction networks and the computation using CRNs as a biochemical programming language.

\subsection{Chemical Reaction Network}

Consider a CRN with $n$ species that interact with each other through $r$ reactions
\begin{defn}[CRN]
A CRN consists of three finite sets: \textit{species set} $\mathcal{S}=\{X_{1}, \ldots,X_{n}\}$ that denotes the subjects participating in the reactions; \textit{complex set} $\mathcal{C}=\bigcup_{j=1}^{r}\{C_{j},C'_{j}\}$ where each complex is a nonnegative integer linear combination of species, $\sum_{i=1}^{n} a_{i} X_{i}$, with stoichiometric coefficient $a_i \in \mathbb{Z}_{\geq 0}$ of $X_i$;
\textit{reaction set} $\mathcal{R}=\{C_j\to C'_{j}: j=1,...,r\}$ satisfying that $\forall C_{j}\to C'_{j}\in \mathcal{R}$, $C_{.j}\neq C'_{.j}$, and $\forall C_{.j}\in \mathcal{C}$, $\exists C'_{.j}\in\mathcal{C}$ supporting either $C_{.j}\to C'_{.j}$ or $C'_{.j}\to C_{.j}$. We use the triple $(\mathcal{S},\mathcal{C},\mathcal{R})$ to represent a CRN.


\end{defn}

Then, the $j$th reaction is written as
\begin{equation}
	\sum_{i=1}^{n} v_{ij} X_{i} \longrightarrow \sum_{i=1}^{n} v^{\prime}_{ij} X_{i},
	\label{eq:1}
\end{equation} 
where $\sum_{i=1}^{n} v_{ij} X_{i}$ and $\sum_{i=1}^{n} v^{\prime}_{ij} X_{i}$ represent the reactant and product complex of the $j$th reaction, respectively. Further, we define the \textit{reaction vectors} of the $j$th reaction by $v'_{.j}-v_{.j}$.
The dynamics of $(\mathcal{S},\mathcal{C},\mathcal{R})$ that captures the concentration change of each species, labeled by $x\in\mathbb{R}^n_{\geq 0}$, if a continuous vector-valued function $\mathscr{K} \colon \mathbb{R}^n_{\geq 0} \to \mathbb{R}^r_{\geq 0}$ is defined to evaluate the reaction rates, written as
\begin{equation}
    \frac{dx(t)}{dt} = \Gamma \mathscr{K}(x),~~x \in \mathbb{R}^n_{\geq0}.
    \label{eq:2}
\end{equation}
where $\Gamma\in\mathbb{Z}^{n\times r}$ is the \textit{stoichiometric matrix} with the $j$th column $\Gamma_{\cdot j}=v'_{.j}-v_{.j}$. 
\textit{Mass action kinetics} is usually used to evaluate the reaction rate, which induces the rate of the $j$th reaction as
\begin{equation}\label{mak}
\mathscr{K}_j(x)=  k_j x^{v.j} \triangleq k_j\prod_{i=1}^{n} x_{i}^{v_{ij}},
\end{equation}
where $k_j>0$ represents the reaction rate constant. 
The CRN system endowed with mass action kinetics offers polynomial ordinary differential equations, often termed as \textit{mass action system} (MAS) and denoted by the quad $(\mathcal{S},\mathcal{C},\mathcal{R},k)$ with $k=(k_1,..., k_r)^{\top}$. With $k_j$ specified, the reaction graph \eqref{eq:1} can be rewritten with $k_j$ alongside the reaction arrow.




We further give the definitions of equilibrium and exponential convergence. 
\begin{defn}[Equilibrium]

For a MAS $(\mathcal{S},\mathcal{C},\mathcal{R},k)$ governed by (\ref{eq:2}) plus (\ref{mak}), if a constant vector $\bar{x} \in 
\mathbb{R}^n_{\geq 0}$ satisfies $\Gamma \mathscr{K}(\bar{x})=0$, then $\bar{x}$ is called the \textit{nonnegative equilibrium point}. If for any $ x_{\mathcal{A}}\in\mathcal{A} \subset \mathbb{R}^n_{\geq 0}$ there is $\Gamma \mathscr{K}(x_{\mathcal{A}}) =0 $, then the set $\mathcal{A}$ is called an equilibrium set.
\end{defn}

\begin{defn}[Exponential Convergence] Consider a MAS $(\mathcal{S},\mathcal{C},\mathcal{R},k)$ governed by (\ref{eq:2}) plus (\ref{mak}) and admitting an equilibrium set $\mathcal{A}$. The solution $x(t)$ of this MAS \textit{converges exponentially} to $\mathcal{A}$ if there are constants $M,\gamma \in \mathbb{R}_{>0}$ supporting $\inf_{y \in \mathcal{A}} \Vert x(t)-y \Vert \leq M e^{-\gamma t}$ for all $t\geq 0$. When the system admits an isolated equilibrium, i.e., $\mathcal{A} = \{\bar{x}\}$, \textit{exponential convergence} is ensured if there exist $M,\gamma \in \mathbb{R}_{>0}$ satisfying $\Vert x(t) - \bar{x} \Vert \leq M e^{-\gamma t}$.
\end{defn}

\subsection{Implementing Calculation Using CRNs}
Various calculations could be realized by CRNs. It implies that each variable (a fixed constant in calculation or information processing) needs to be represented by some counterpart in CRNs. Thus, considering the CRN $(\mathcal{S},\mathcal{C},\mathcal{R})$ govern by  (\ref{eq:2}) and (\ref{mak}) with $\mathcal{S} = \mathcal{X}_1 \bigcup \mathcal{X}_2$, where $\mathcal{X}_1$ denotes the input species set and $\mathcal{X}_2$ refers to the output species set, the dynamical equations are written as 
\begin{equation}\label{eq:bcm}
\left\{
\begin{aligned}
\dot{x}_1(t) & = f_1(x_1, x_2), \\
\dot{x}_2(t) & = f_2(x_1, x_2)
\end{aligned}
\right.
\end{equation}
with $x_1 (t) \in \mathbb{R}^{n_1}_{\geq 0}, x_2 (t) \in \mathbb{R}^{n-n_1}_{\geq 0}$ indicate the concentration of input species and output species, respectively. When implementing the calculation of the function $g(s) \in \mathbb{R}^{n-n_1}$ with $s \in \mathbb{R}^{n_1}$ using CRNs, it prompts to use $x_1(0)$ to encode values of the independent variable $s$, while the LSSs of output species, i.e., 
\begin{equation} \label{def:lss}
  \mathscr{L}[x(t)]= (\lim_{t \to \infty} x_2(t))_{\mathcal{X}_2 \subset \mathcal{S}}
\end{equation}
to represent the corresponding dependent variables in any calculation. We refer to the CRN that can accomplish the desired computational task as a \textit{biochemical computational module}. Note that \eqref{def:lss} includes globally/locally asymptotically stable equilibrium of output species, if any.

\subsubsection{Example 1.}
\label{exam:addition}
\hspace{-1em} For the addition operation $g(s) = s_1 + s_2$with $s=(s_1, s_2)$ in $\mathbb{R}^{2}_{\geq 0}$, we can implement it by the MAS 
    \begin{gather}
    S_1 \stackrel{1}{\longrightarrow} S_1+G,~~~ 
    S_2 \stackrel{1}{\longrightarrow}S_2+G,~~~
    G\stackrel{1}{\longrightarrow}\emptyset,
       \notag
      \end{gather}
where $\mathcal{X}_1 = \{ S_1, S_2\}$ clearly act as \textit{catalyst species} (no concentration changes at all times, i.e., $\forall t,~s_1(t)=s_1(0),~s_2(t)=s_2(0)$). The dynamics follows species $\mathcal{X}_2 = \{G\}$, i.e., $\dot{g}(t)=s_1(t)+s_2(t)-g(t)$. Given $s_1(0)=s_1$ and $s_2(0)=s_2$, this yields the LSS of $G$ to be $\overline{g}=\lim_{t \rightarrow \infty}g(t)=s_1(0)+s_2(0)$, successfully storing the addition result $s_1+s_2$.





In the following, lowercase variables are used to represent the concentrations of the corresponding uppercase species.



\section{convergence rate lower bound for Biochemical Computational Module}\label{sec:convergence rate}
In this section, we define to specify the convergence rate when computing the vector-valued function $g(s) \in \mathbb{R}^{n_2}$, with $n_1+n_2 = n$, using the MAS (\ref{eq:bcm}) designed with multiple output species. 

\begin{defn}\label{def_conv_rate}
Suppose that (\ref{eq:bcm}) admits the limiting steady state $\bar{x} \in \mathbb{R}^n_{\geq 0}$ with $\bar{x}_2 = g(x_1(0)) \in \mathbb{R}^{n_2}$. The convergence rate of (\ref{eq:bcm}) to $\bar{x}$ is defined as
\begin{equation}\label{def_conv}
    \rho = -\limsup\limits_{t\rightarrow \infty} \frac{ \mathrm{ln} \Vert x(t) - \bar{x} \Vert}{t},
\end{equation}
whenever $\rho \in \left(0, \infty \right]$. Here, $\Vert \cdot \Vert$ denotes the $2$-norm of a vector. 
\end{defn}

In the context of molecular computation, the convergence rate implies that the biochemical computational module govern by (\ref{eq:bcm}) computes $g(s)$ at speed $\rho$. This definition extends the convergence rate for computing scalar functions using CRNs in \citep{anderson2025chemical} to the multiple output setting. The following lemma, adapted from \citep{anderson2025chemical}, further characterizes this rate.


\begin{lem}\label{lem_conv_rate}
Let the MAS (\ref{eq:bcm}) admits the LSS $\bar{x} \in \mathbb{R}^n_{\geq 0}$ with $\bar{x}_2 = g(x_1(0)) \in \mathbb{R}^{n_2}$. Suppose that there exist $M > 0, \rho > 0$ and $a > 0$ for all $t \geq 0$
\begin{equation}\label{lem5_1}
    \Vert x(t) - \bar{x} \Vert \leq M t^a e^{-\rho t},
\end{equation}
then (\ref{eq:bcm}) computes $ g(x_1(0)) = \bar{x}_2 $ at a rate that is at least $\rho$. Conversely, if the MAS (\ref{eq:bcm}) converges to $\bar{x}$ at a rate greater than $\rho$, then there exists a $M>0$ such that for all $t\geq 0$ 
\begin{equation}\label{eq:realization error}
    \Vert x(t) - \bar{x} \Vert \leq M e^{-\rho t}.
\end{equation}
\end{lem}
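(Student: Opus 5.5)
Both directions follow directly from Definition \ref{def_conv_rate}: take logarithms and divide by $t$. Throughout, $x(t)$ is the solution of (\ref{eq:bcm}), so $t \mapsto \Vert x(t)-\bar{x}\Vert$ is continuous.

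\emph{First direction.} Assume (\ref{lem5_1}). If $x(t)=\bar{x}$ for all large $t$, the numerator in (\ref{def_conv}) is $-\infty$, so $\rho_{\text{def}}=\infty$ and there is nothing to prove. Otherwise, for every $t>0$ with $x(t)\neq\bar{x}$ we have
\begin{equation*}
\frac{\ln \Vert x(t)-\bar{x}\Vert}{t} \leq \frac{\ln M}{t} + a\,\frac{\ln t}{t} - \rho .
\end{equation*}
Both $\ln M/t$ and $\ln t/t$ tend to $0$. Taking $\limsup_{t\to\infty}$ gives $\limsup_{t\to\infty} \ln\Vert x(t)-\bar{x}\Vert/t \leq -\rho$. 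By (\ref{def_conv}), the convergence rate is at least $\rho$, so (\ref{eq:bcm}) computes $g(x_1(0))$ at rate at least $\rho$.

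\emph{Converse.} Suppose the rate $\rho^\ast$ from (\ref{def_conv}) satisfies $\rho^\ast>\rho$, with $\rho^\ast$ possibly infinite. Fix $\epsilon$ with $\rho<\epsilon<\rho^\ast$. By the definition of $\limsup$, there is $T>0$ such that $\ln\Vert x(t)-\bar{x}\Vert/t \leq -\epsilon$ for all $t\geq T$; times with $x(t)=\bar{x}$ satisfy this trivially. Hence for $t\geq T$,
\begin{equation*}
\Vert x(t)-\bar{x}\Vert \leq e^{-\epsilon t} \leq e^{-\rho t}.
\end{equation*}
On the compact interval $[0,T]$, the map $t\mapsto \Vert x(t)-\bar{x}\Vert e^{\rho t}$ is continuous, because the solution of the polynomial ODE exists on $[0,\infty)$ and converges to $\bar{x}$. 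It therefore has a finite maximum $M_0$. Setting $M=\max\{M_0,1\}$ gives (\ref{eq:realization error}) for all $t\geq 0$.

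\emph{Main obstacle.} The only delicate point is the converse. The strict inequality $\rho^\ast>\rho$ is what leaves a gap $\epsilon$ to absorb the arbitrary small slack in the $\limsup$. The trajectory must also be finite and continuous on $[0,T]$, which holds because the solution is assumed to exist for all $t\geq0$ and converge. The forward direction only needs the observation that polynomial factors $t^a$ do not affect exponential rates.
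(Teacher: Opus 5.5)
Your proposal is correct and follows essentially the same route as the paper. The forward direction takes logarithms and the $\limsup$. The converse combines a tail bound $\Vert x(t)-\bar{x}\Vert \leq e^{-\rho t}$ for $t\geq T$ with a continuity bound on the compact interval $[0,T]$, where your $M=\max\{M_0,1\}$ plays the role of the paper's $M=(1+\max_{t\in[0,T]}\Vert x(t)-\bar{x}\Vert)e^{\rho T}$; your choice of $\epsilon\in(\rho,\rho^\ast)$ makes explicit the step the paper leaves implicit in ``for sufficiently large $T$''.
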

\begin{pf}
The forward direction follows directly by applying the operations in Definition \ref{def_conv_rate} to both sides of \eqref{lem5_1}. Conversely, for sufficiently large $T>0$, defining $M = \left(1 + \max_{t \in [0,T]}\{\|x(t) - \bar{x}\|\}\right)e^{\rho T}$ yields~(8) for all $t \geq 0$ due to the continuity of the norm.
\end{pf}

Definition \ref{def_conv_rate} establishes that the convergence rate is $\rho$ if the deviation between the trajectory of MAS and its LSS has the form of $f(t)e^{-\rho t}$ with $f(t)$ growing slower than any linear exponential function $e^{\epsilon t}$ ($\forall \epsilon >0$) when $t \to \infty$, i.e., satisfying $\lim_{t \to \infty} \frac{\mathrm{ln} f(t)}{t} = 0$ \citep{anderson2025chemical}. Lemma \ref{lem_conv_rate} further describes that $\rho$ serves as a lower bound for the convergence rate, derived from the dynamic characterization of the MAS. Conversely, when the explicit knowledge of the convergence rate is available, a more compact upper bound of the realization error can be found in (\ref{eq:realization error}), which certifies exponential convergence of the MAS and allows us to determine the finite time required to reach any prescribed computational accuracy $\epsilon$. Additionally, given $\epsilon>0$ and a finite operation time $T$, a CRN design is considered adequate if its convergence rate lower bound satisfies $\rho \geq \frac{1}{T} \ln(M/\epsilon)$. Therefore, linking $\rho$ to MAS characteristics facilitates the design of CRN structure and the rate constants tuning to meet accuracy. Note that the rate in (\ref{def_conv}) characterizes output convergence, as it lower-bounds the decay of both the entire system as well as the output species toward the LSSs.

Now we study the convergence rate lower bound of biochemical computational modules consisting only of inflow-outflow reactions and reactions involving one single non-catalytic reactant. Notably, the MAS govern by (\ref{eq:2}) and (\ref{mak}) becomes an affine linear system, i.e.,
\begin{equation}\label{eq:bcm_linear}
    \dot{x}(t) = A x(t) + u, x(0) \in \mathbb{R}^{n}_{\geq 0},
\end{equation}
where $A \in \mathbb{R}^{n \times n}, u \in \mathbb{R}^{n}$ represent the system matrix and inflow vector composed of rate constants. In the following, we provide a concrete convergence rate lower bound of implementing computations using (\ref{eq:bcm_linear}).
\begin{prop}\label{prop_linear_rate}
Suppose that the MAS (\ref{eq:bcm_linear}) admits the globally asymptotically stable equilibrium $\bar{x}= (\bar{x}_1, \bar{x}_2)^{\top} = A^{-1}u$, i.e., $A$ is Hurwitz with $n$ eigenvalues $\Re(\lambda_n) \leq \Re( \lambda_{n-1}) \leq \cdots \leq \Re (\lambda_1) < 0$, where $\lambda_1$ is the dominant eigenvalue with largest (least negative) real part,  then the convergence rate of calculating any function satisfying $g(x_1(0)) = \left[\mathbb{0}_{n_2 \times n_1} ~ I_{n_2 \times n_2}\right]A^{-1}u$ using (\ref{eq:bcm_linear}) is at least $\rho = \vert \Re (\lambda_1) \vert$.
\end{prop}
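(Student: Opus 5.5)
The plan is to solve the affine system explicitly and then invoke Lemma \ref{lem_conv_rate} with a polynomial prefactor coming from the Jordan structure of $A$. (Note the sign convention: since $A\bar{x}+u=0$, the equilibrium is $\bar{x}=-A^{-1}u$. I will write $\bar{x}$ for the equilibrium throughout; the sign plays no role in the rate argument.)

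First, set $e(t)=x(t)-\bar{x}$. Because $A\bar{x}+u=0$, the error obeys the homogeneous linear system $\dot e = Ae$. Its solution is $e(t)=e^{At}e(0)$. In particular, the output error $x_2(t)-\bar{x}_2=\left[\mathbb{0}_{n_2\times n_1}~I_{n_2\times n_2}\right]e(t)$ is controlled by $\|e(t)\|$. So it suffices to bound the whole state error.

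Second, bound $\|e^{At}\|$. Take the Jordan decomposition $A=PJP^{-1}$, so that $e^{At}=Pe^{Jt}P^{-1}$. For a Jordan block of size $m_i$ with eigenvalue $\lambda_i$, the exponential equals $e^{\lambda_i t}$ times an upper triangular matrix whose entries are $t^k/k!$ with $k\le m_i-1$. Hence there is a constant $C>0$ such that
\begin{equation*}
\|e^{Jt}\|\le C(1+t)^{m-1}e^{\Re(\lambda_1)t}\qquad \text{for all } t\ge 0,
\end{equation*}
where $m$ is the largest block size and we use $\Re(\lambda_i)\le\Re(\lambda_1)$. This gives
\begin{equation*}
\|x(t)-\bar{x}\|\le \|P\|\,\|P^{-1}\|\,C\,(1+t)^{m-1}e^{-|\Re(\lambda_1)|t}\,\|e(0)\|.
\end{equation*}

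Third, convert this estimate into a rate. The bound has the form $f(t)e^{-\rho t}$ with $\rho=|\Re(\lambda_1)|$ and $f$ polynomial, so $\ln f(t)/t\to 0$. Applying Definition \ref{def_conv_rate} then gives
\begin{equation*}
-\limsup_{t\to\infty}\frac{\ln\|x(t)-\bar{x}\|}{t}\ge\rho.
\end{equation*}
If $e(0)=0$, then $x(t)\equiv\bar{x}$, and the rate is $\infty$ by convention, which is consistent with the claim.

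The one technical point is matching the hypothesis of Lemma \ref{lem_conv_rate}, which is stated as $Mt^ae^{-\rho t}$ with $a>0$. The factor $t^a$ vanishes at $t=0$, so the bound $(1+t)^{m-1}$ does not fit it literally. Two routes are available:
\begin{itemize}
\item argue directly from the definition of $\rho$ as above, which only concerns $t\to\infty$;
\item note that for $t\ge 1$ we have $(1+t)^{m-1}\le 2^{m-1}t^{\max(m-1,1)}$, and the finite interval $[0,1]$ is irrelevant to the $\limsup$.
\end{itemize}
I would use the direct route. Beyond this, the only care needed is to make the Jordan-block estimate uniform in $t$, which is routine.
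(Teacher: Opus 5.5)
Your proof is correct and follows essentially the same route as the paper: pass to the error system $\dot e = Ae$, bound $\|e^{At}\|$ through the Jordan form by $\|P\|\,\|P^{-1}\|$ times a polynomial in $t$ times $e^{\Re(\lambda_1)t}$, and note that the polynomial factor contributes nothing to the $\limsup$ in Definition~\ref{def_conv_rate}. Your two side remarks are valid and slightly cleaner than the paper's presentation: the equilibrium is $\bar{x}=-A^{-1}u$, which the paper's proof implicitly uses by writing $x(t)+A^{-1}u$; and the $t^a$ form in Lemma~\ref{lem_conv_rate} does not literally match a $(1+t)^{m-1}$ prefactor, so arguing directly from the definition is the right move.
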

\begin{pf}
Considering the solution to (\ref{eq:bcm_linear}),
we have $\|x(t)+A^{-1}u\| \leq \|e^{At}\|\|x(0)+A^{-1}u\|$. For the  system matrix $A \in \mathbb{R}^{n \times n}$, let $A=PJP^{-1}$ be its Jordan form. 
Then, one can obtain
\begin{align*}
    \Vert e^{At}  \Vert & \leq \Vert P \Vert \Vert e^{Jt} \Vert \Vert P^{-1} \Vert. 
\end{align*}

Let $J = \Lambda + N$ be the decomposition into its diagonal part $\Lambda = \operatorname{diag}(\lambda_1, \dots, \lambda_n)$ and nilpotent part $N$. Notably, $\Lambda$ contains the eigenvalues of $A$, and $N^d = 0$ where $d = \max_i \{d_i\}$ is the order of the largest Jordan block. Therefore, combining the definition of $2$-norm for matrices leads to
\begin{align*}
\Vert e^{Jt} \Vert & =  \Vert e^{\Lambda t} e^{Nt} \Vert \leq \Vert  e^{\Lambda t} \Vert \Vert \sum^{d-1}_{j=0} \frac{N^j}{j!} t^j \Vert \\
    & \leq  \sqrt{\lambda_{\mathrm{max}} \left[ (e^{\Lambda t})^{\mathrm{H}} e^{\Lambda t}\right] } \sum^{d-1}_{j=0} 
 \Vert \frac{N^j}{j!} t^j  \Vert.
\end{align*}
\begin{align*}
 & = \sqrt{\lambda_{\mathrm{max}} \left[ (e^{\Lambda t})^{\mathrm{H}} e^{\Lambda t}\right] }  
 \sum^{d-1}_{j=0} \frac{t^j}{j!} \sqrt{\lambda_{\mathrm{max}} \left[ (N^j)^{\mathrm{H}} N^j \right] }.\\
\end{align*}
Due to the eigenvalues of $ (N^j)^{\mathrm{H}} N^j $ are composed of $\{0,1\}$, we have the estimation as follows
\begin{align*}\label{prop_deriv}
\Vert x(t) + A^{-1} u  \Vert  
& \leq C_1 \sqrt{\lambda_{\mathrm{max}} \left[ (e^{\Lambda t})^{\mathrm{H}} e^{\Lambda t}\right] }  
 \sum^{d-1}_{j=0} \frac{t^j}{j!} \\
 & = C_1 e^{ \Re (\lambda_1)} \sum^{d-1}_{j=0} \frac{t^j}{j!},
\end{align*}
where $C_1 = \Vert P \Vert \Vert P^{-1} \Vert \Vert x(0) + A^{-1} u \Vert$.
Combining (\ref{def_conv}) and Lemma \ref{lem_conv_rate}, it is demonstrated that 
\begin{equation*}
    \rho \geq -\Re(\lambda_1) = \vert \Re(\lambda_1) \vert
\end{equation*}
due to $\limsup\limits_{t\rightarrow \infty} \frac{\mathrm{ln}(\sum^{d-1}_{j=0}\frac{t^j}{j!})}{t} = 0$, i.e., the convergence rate of realizing computation using (\ref{eq:bcm_linear}) is at least $\vert \Re(\lambda _1) \vert$. $\hfill \Box$
\end{pf}

In the following, we extend our analysis to nonlinear MASs to evaluate more sophisticated biological computations, which necessitates a rigorous characterization of convergence rate for such positive polynomial systems. Suppose that the general MAS governed by (\ref{eq:2}) and (\ref{mak}) admits one LSS $\bar{x}$, and we denote $f(x)=\Gamma  \mathscr{K}(x) $. After applying the coordinate transformation of (\ref{eq:2}) and (\ref{mak}) and setting $\tilde{x} = x-\bar{x}$, one can have
\begin{equation}\label{eq:zero_bsm}
    \dot{\tilde{x}} = f(\tilde{x}+\bar{x}) \triangleq h(\tilde{x})
\end{equation}
with the zero LSS. Notably, it implies that there are no constant items in $h(\tilde{x})$. Due to the polynomial form of $h(\tilde{x})$, (\ref{eq:zero_bsm}) can be written as
\begin{equation}\label{linear_zero_bcm}
\dot{\tilde{x}} = \left.\frac{\partial h(\tilde{x})}{\partial \tilde{x}}\right|_{\tilde{x} = 0} \tilde{x} + \tilde{h}(\tilde{x}),
\end{equation}
where $\left.\frac{\partial h(\tilde{x})}{\partial \tilde{x}}\right|_{\tilde{x} = 0} = \left.\frac{\partial f(x)}{\partial x}\right|_{x = \bar{x}}$ is equal to the linear part of the system (\ref{eq:zero_bsm}). Therefore, $\tilde{h}(\tilde{x})$ consists solely of monomials of degree two or higher. We now formulate the following theorem to demonstrate that the nonlinear MASs,  served as the biochemical computational modules and admitting asymptotically stable equilibrium points, present the computation speed that can be quantitatively expressed as the dominant eigenvalue with largest real part of the Jacobian matrix, evaluated at the corresponding LSS. The proof relies on two technical lemmas provided in the Appendix, i.e., Lemma \ref{lem_scale}, which bounds the norm of higher-order polynomial terms in $\tilde{h}(\tilde{x})$, and Lemma \ref{lem_appro}, which shows that for any matrix over the complex field and $\epsilon > 0$, a diagonalizable matrix can be found to approximate it such that the corresponding eigenvalues differ by at most $\epsilon$  of the original ones. 


\begin{thm}\label{prop_nonlinear_rate}
Consider the MAS governed by (\ref{eq:2}) and (\ref{mak}) allowing an asymptotically stable equilibrium point $\bar{x}$. Suppose that the Jacobian matrix has $n$ eigenvalues satisfying $\Re{\lambda}_n \leq \Re{\lambda}_{n-1} \leq \cdots \leq \Re{\lambda}_1 < 0$. The convergence rate of calculating any function satisfying $g(x_1(0)) = \bar{x}_2$ using (\ref{eq:2}) and (\ref{mak}) is at least given by $\vert \Re{\lambda}_1 \vert$.
\end{thm}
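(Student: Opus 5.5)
We work in the shifted coordinates \eqref{linear_zero_bcm}, writing $J_f = \left.\frac{\partial f(x)}{\partial x}\right|_{x=\bar{x}}$, so that $\dot{\tilde{x}} = J_f\tilde{x} + \tilde{h}(\tilde{x})$ with $\tilde{h}$ made up only of monomials of degree at least two. The strategy is a Lyapunov-type estimate built from a nearly diagonalizing change of basis; this replaces the Jordan-form computation used in Proposition~\ref{prop_linear_rate}, which does not survive the nonlinear perturbation. Fix an arbitrary $\epsilon \in (0, |\Re\lambda_1|)$. By Lemma~\ref{lem_appro}, I pick a diagonalizable matrix $B = Q D Q^{-1}$, with $D$ diagonal, such that $\|J_f - B\| \le \epsilon/(3\kappa)$ and every eigenvalue of $B$ lies within $\epsilon/3$ of an eigenvalue of $J_f$. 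Here $\kappa = \|Q\|\|Q^{-1}\|$. Because $Q$ depends on the approximation, the ordering of the choices matters. The cleanest route is to apply the triangular (Schur) form of $J_f$ together with a diagonal scaling $\mathrm{diag}(1,\delta,\dots,\delta^{n-1})$. This produces a basis $Q$ in which $Q^{-1}J_fQ = D + E$, where $D$ holds the eigenvalues on its diagonal and $\|E\| \le \epsilon/3$. That is exactly the content I expect Lemma~\ref{lem_appro} to supply.

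In the variable $y = Q^{-1}\tilde{x}$ the dynamics become $\dot y = (D+E)y + Q^{-1}\tilde{h}(Qy)$. I take $V(y) = \|y\|^2 = y^{\mathrm H}y$ and compute
\begin{equation*}
\dot V = 2\Re\big(y^{\mathrm H}Dy\big) + 2\Re\big(y^{\mathrm H}Ey\big) + 2\Re\big(y^{\mathrm H}Q^{-1}\tilde{h}(Qy)\big) \le 2\big(\Re\lambda_1 + \tfrac{\epsilon}{3}\big)V + 2\|Q^{-1}\|\,\|y\|\,\|\tilde{h}(Qy)\|.
\end{equation*}
Lemma~\ref{lem_scale} then gives constants $c, r_0 > 0$ with $\|\tilde{h}(\tilde{x})\| \le c\|\tilde{x}\|^2$ whenever $\|\tilde{x}\| \le r_0$. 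Shrinking the radius if necessary, on the ball $\|y\| \le r := \min\{r_0/\|Q\|,\ \epsilon/(3c\|Q\|^2\|Q^{-1}\|)\}$ the last term is bounded by $\frac{2\epsilon}{3}V$. Hence $\dot V \le 2(\Re\lambda_1 + \epsilon)V$ on that ball. The ball is forward invariant, since $V$ is decreasing there. Grönwall's inequality then yields $\|y(t)\| \le \|y(t_0)\|e^{(\Re\lambda_1+\epsilon)(t-t_0)}$.

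To connect this local estimate with the trajectory, I use the hypothesis that $\bar{x}$ is the LSS of the trajectory in question, so $x(t) \to \bar{x}$. There is therefore a time $t_0$ with $\|Q^{-1}\tilde{x}(t)\| \le r$ for all $t \ge t_0$. Returning to $\tilde{x} = Qy$ gives $\|x(t) - \bar{x}\| \le \kappa\|\tilde{x}(t_0)\|e^{(\Re\lambda_1+\epsilon)(t-t_0)}$ for $t \ge t_0$. Applying Definition~\ref{def_conv_rate} and the forward direction of Lemma~\ref{lem_conv_rate} with $a$ arbitrary, since the prefactor is constant, I obtain $\rho \ge |\Re\lambda_1| - \epsilon$. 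Letting $\epsilon \to 0$ gives $\rho \ge |\Re\lambda_1|$. Since the limsup in \eqref{def_conv} is unaffected by the finite interval $[0,t_0]$, no global constant is needed.

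The main obstacle is the uniformity issue in the first step. The conditioning constant $\kappa$ of the near-diagonalizing basis blows up as $\epsilon \to 0$ when $J_f$ has nontrivial Jordan blocks. So one must fix $\epsilon$ first and only then choose the neighbourhood radius $r$, and it must be checked that the error term $E$ is small in the $y$-coordinates rather than merely in the original ones. I would handle this with the Schur-plus-scaling construction described above, which controls $\|E\|$ directly in the new basis. Because only $\limsup$ rates matter, the $\epsilon$-dependent constants are harmless.
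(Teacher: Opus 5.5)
Your proof is correct. It shares the paper's skeleton: shift to $\bar x$, near-diagonalize the Jacobian, use a quadratic Lyapunov function in the new coordinates, then let $\epsilon\to 0$. The key step, however, is done differently.

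The paper uses Lemma~\ref{lem_appro} to get a diagonalizable $B_\epsilon$ with $\Vert A-B_\epsilon\Vert<\epsilon$ in the \emph{original} coordinates, and diagonalizes it by $P_\epsilon$. It bounds the transformed linear error by $c_1\epsilon$ with $c_1=\Vert P_\epsilon\Vert\,\Vert P_\epsilon^{-1}\Vert$ and handles the remainder by a Bernoulli comparison. It then discards $c_1\epsilon$ ``since $\epsilon$ is arbitrary''. But $c_1$ depends on $\epsilon$: for the construction in Lemma~\ref{lem_appro} and a $2\times 2$ Jordan block, $c_1$ is of order at least $1/\epsilon$. So that estimate never becomes small, which is exactly the uniformity problem you flag.

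Your basis $Q=US$ with $S=\mathrm{diag}(1,\delta,\dots,\delta^{n-1})$ does two things the paper's route does not:
\begin{itemize}
\item It makes the off-diagonal part small directly in the new coordinates and keeps the \emph{exact} eigenvalues on the diagonal. No spectral perturbation is needed, and nothing like Remark~1 about preserving the dominant index.
\item Because you fix the radius $r$ only after $\epsilon$ and $Q$, you can absorb the remainder into the linear rate and finish with plain Gr\"onwall instead of Bernoulli's equation.
\end{itemize}
The paper's formulation offers a reusable approximation lemma and an explicit comparison bound. Neither is needed for a $\limsup$ rate, and as written it does not control the conditioning.

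A few points in your write-up should be tidied.
\begin{itemize}
\item Your first attempted choice, $\Vert J_f-B\Vert\le\epsilon/(3\kappa)$ with $\kappa=\Vert Q\Vert\,\Vert Q^{-1}\Vert$, is circular because $Q$ is determined by $B$. Drop it.
\item The Schur-plus-scaling fact is \emph{not} what Lemma~\ref{lem_appro} supplies. That lemma controls $B-A$ only in the original norm, with an uncontrolled diagonalizer. State and prove the fact yourself: if $J_f=UTU^{\mathrm H}$ then $(S^{-1}TS)_{ij}=T_{ij}\delta^{j-i}$, so for $\delta\le 1$ the strictly upper triangular part $E$ satisfies $\Vert E\Vert\le\delta\Vert T\Vert_F$.
\item Lemma~\ref{lem_scale} as stated gives an exponent $p>1$, not necessarily $2$. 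Either exponent works in your radius argument. The quadratic bound is also immediate on its own, since each monomial of degree at least two has modulus at most $\Vert\tilde x\Vert^2$ when $\Vert\tilde x\Vert\le 1$.
\end{itemize}
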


\begin{pf}
Considering the system \eqref{linear_zero_bcm}, for $A = \left.\frac{\partial h(\tilde{x})}{\partial \tilde{x}}\right|_{\tilde{x} = 0}$ and any $\epsilon>0$, there exists a diagonalizable matrix $B_{\epsilon}$ and $ C_{\epsilon} = A-B_{\epsilon}$ satisfying $\Vert C_{\epsilon} \Vert < \epsilon$. Then, we have 
\begin{equation*}
    \begin{aligned}
\dot{\tilde{x}} =B_\epsilon \tilde{x} + C_\epsilon \tilde{x} + \tilde{h}(\tilde{x}). 
    \end{aligned}
\end{equation*}
Below, we can find an invertible matrix $P_{\epsilon}$ to perform a linear transformation on $ \tilde{x} $, i.e., $ z = P_{\epsilon}^{-1} \tilde{x},$ where 
$ P_{\epsilon}^{-1} \tilde{x} P_{\epsilon} = \Lambda_{\epsilon}$ and $\Lambda_{\epsilon}$ is a diagonal matrix. Then, we have
\begin{equation}\label{eq:tran_eq}
    \begin{aligned}
\dot{z} = P^{-1}_{\epsilon} \dot{\tilde{x}} = \Lambda_{\epsilon} z + P^{-1}_{\epsilon} C_{\epsilon} P_{\epsilon} z + P^{-1}_{\epsilon} \tilde{h} (P_{\epsilon} z).
    \end{aligned}
\end{equation}
Now we use the quadratic Lyapunov
function $V(z) = \frac{1}{2} \Vert z \Vert ^{2}$ to bound the energy of
deviations from equilibrium, and combined with \eqref{eq:tran_eq}, lemma \ref{lem_scale} and \ref{lem_appro}, we have
\begin{equation*}
    \begin{aligned}
        \dot{V} (z)  = & \frac{1}{2} (z^{\mathrm{H}} \dot{z} + \dot{z}^\mathrm{H}z ) \\
         = & z^{\mathrm{H}} \frac{\Lambda_{\epsilon} + \Lambda ^{\mathrm{H}}_{\epsilon}}{2}  z + z^{\mathrm{H}} \frac{ P^{-1}_{\epsilon} C_{\epsilon} P_{\epsilon}  + P^{\mathrm{H}}_{\epsilon} C^{\mathrm{H}}_{\epsilon} (P^{-1}_{\epsilon})^{\mathrm{H}} }{2} z \\
        & + \frac{z^{\mathrm{H}} P^{-1}_{\epsilon} \tilde{h} + \tilde{h}^{\mathrm{H}}(P^{-1}_{\epsilon})^{H}z  }{2} \\
     \leq & \lambda_{\mathrm{max}} \Re\left[\Lambda_{\epsilon}\right] \Vert z \Vert^{2} + c_1 \epsilon  \Vert z \Vert^2 + c_2 \Vert z \Vert ^{p+1} \\
     = & 2 ( \lambda_{\mathrm{max}} \Re\left[\Lambda_{\epsilon}\right] + c_1 \epsilon) V(z) +  \tilde{c}_2 V(z)^{\alpha},
    \end{aligned}
\end{equation*}
when $t > T$ with $T$ chosen large enough to ensure that $\Vert z\Vert < 1$. Here, $c_1 = \Vert P^{-1}_{\epsilon} \Vert \Vert P_{\epsilon} \Vert$, $c_2 = c \Vert P^{-1}_{\epsilon} \Vert$, $\tilde{c}_2 = 2^{\frac{p+1}{2}}c_2$ and $\alpha = \frac{p+1}{2} > 1$. Since the positive constant $\epsilon$ is arbitrary, one can have 
\begin{equation*}
    \begin{aligned}
        \dot{V} (z)  \leq 2  \lambda_{\mathrm{max}} \Re\left[\Lambda_{\epsilon}\right]  V(z) +  \tilde{c}_2 V(z)^{\alpha},
    \end{aligned}
\end{equation*}
Denoting $\lambda^{\prime}_{1}=\lambda_{\mathrm{max}} \Re\left[\Lambda_{\epsilon}\right]$, according to the Comparison Lemma and the solution to Bernoulli's equation, we have the inequality
\begin{equation*}
V(z(t)) \leq (-\frac{\tilde{c}}{2 \lambda^{\prime}_1} + (\frac{\tilde{c}}{2\lambda^{\prime}_{1}}+V(0)^{1-\alpha})e^{2\lambda^{\prime}_1 (1-\alpha)})^{\frac{1}{1-\alpha}}.
\end{equation*}
Based on these, we derive the upper bound of $\mathrm{ln} \Vert x - \bar{x} \Vert$ as follows
\begin{align*}
    \mathrm{ln} \Vert x-\bar{x} \Vert  & =  \mathrm{ln}  \Vert \tilde{x} \Vert \leq  \mathrm{ln} \Vert P_{\epsilon} \Vert + \frac{\mathrm{ln}2V(z)}{2}\\
    & \leq c_3 + \frac{\mathrm{ln} \left[-\frac{\tilde{c}_2}{2\lambda^{\prime}_1} + (\frac{\tilde{c}_2}{2\lambda^{\prime}_1}+V(0)^{1-\alpha})e^{2\lambda^{\prime}_1 (1-\alpha)} \right]  }{2(1-\alpha)}
\end{align*}
with $c_3 = \frac{\mathrm{ln2}}{2} + \mathrm{ln} \Vert P_{\epsilon} \Vert$. Therefore, considering the convergence rate (\ref{def_conv}), we have 
\begin{align*}
-\rho & \leq \limsup\limits_{t\rightarrow \infty}( \frac{\mathrm{ln} \left[-\frac{\tilde{c}_2}{2\lambda^{\prime}_1} + (\frac{\tilde{c}_2}{2\lambda^{\prime}_1}+V(0)^{1-\alpha})e^{2\lambda^{\prime}_1 (1-\alpha)t} \right]  }{2(1-\alpha)t}) \\
& = \limsup\limits_{t\rightarrow \infty} \frac{(\frac{\tilde{c}_2}{2\lambda^{\prime}_1}+V(0)^{1-\alpha})2\lambda^{\prime}_1 (1-\alpha)e^{2\lambda^{\prime}_1 (1-\alpha)t}}{2(1-\alpha)(-\frac{\tilde{c}_2}{2\lambda^{\prime}_1} + (\frac{\tilde{c}_2}{2\lambda^{\prime}_1}+V(0)^{1-\alpha})e^{2\lambda^{\prime}_1 (1-\alpha)t})}\\
& = \lambda^{\prime}_1.
\end{align*}
Recalling the lemma \ref{lem_appro}, for any $\epsilon > 0$ here, we can derive that $\vert \lambda^{\prime}_1 - \Re \lambda_1 \vert < \epsilon$. Therefore, it holds that $-\rho \leq \Re \lambda_1 + \epsilon$. Consequently, due to the  arbitrariness of $\epsilon$, it implies that the convergence rate of computing $g(x_1(0))=\bar{x}_2$ satisfying $\rho \geq - \Re \lambda_1 $, i.e., at least $\vert \Re \lambda_1 \vert$. $\hfill \Box$
\end{pf}

It should be noted that these characterizations are tailored for MASs where LSSs represent computational results, specifically Proposition \ref{prop_linear_rate} and Theorem \ref{prop_nonlinear_rate} provide the local convergence rate for individual equilibrium. Thus, they are applicable to multistable systems where distinct LSSs encode input-dependent computational results, potentially with varying convergence rate lower bounds. However, while CRNs exhibit diverse behaviors, limit cycles are excluded as they fail to yield the static convergent output required for computational tasks.

\section{Examples}\label{sec:example}
This section illustrates the theoretical lower bound through two numerical examples, demonstrating that it serves as the dominant factor governing computation speed, and system trajectories exhibiting consistent with our theoretical derivations.

\subsection{Hadamard Product}
This example illustrates Proposition \ref{prop_linear_rate} applied to the linear case. Considering the operation between vectors $a=(a_i), b=(b_i),c=(c_i) \in \mathbb{R}^2_{\geq 0}$, i.e., 
\begin{equation*}
a \odot b \odot c = 
\begin{pmatrix}
a_1 b_1 c_1 \\
a_2 b_2 c_2
\end{pmatrix} \triangleq 
\begin{pmatrix}
    z_1 \\
    z_2
\end{pmatrix},
\end{equation*}
then we construct the following biochemical reaction network governed by a linear system
\begin{equation}\label{eq:hadamard}
\begin{aligned}
    A_1 + B_1  \xrightarrow{k_1} A_1 + B_1 + M_1, & ~M_1 \xrightarrow{k_1} \varnothing, \\
    M_1 + C_1 \xrightarrow{k_2}  M_1 + C_1 + Z_1, & ~Z_1 \xrightarrow{k_2} \varnothing, \\
     A_2 + B_2  \xrightarrow{k_3} A_2 + B_2 + M_2, & ~M_2 \xrightarrow{k_3} \varnothing, \\
    M_2 + C_2 \xrightarrow{k_4}  M_2 + C_2 + Z_2, & ~Z_2 \xrightarrow{k_4} \varnothing 
\end{aligned}
\end{equation}
with input species $\{A_i, B_i, C_i\}$, and four output species $\{Z_i, M_i\}$ representing the resulted vector $z=(z_i) \in \mathbb{R}^2_{\geq 0}$. Eigenvalues of the system matrix of \eqref{eq:hadamard} is given by $\{k_1, k_2, k_3, k_4\}$.
According to Proposition \ref{prop_linear_rate}, we immediately find that the speed of computing $a \odot b \odot c$ using (\ref{eq:hadamard}) is at least $\rho_1 = \mathrm{min}\{k_1,k_2,k_3,k_4\}$, which corresponds to the smallest degradation rate among the involved species. This lower bound of convergence rate can be determined without explicitly solving the system \eqref{eq:hadamard}. Through one of the output species $Z_1$, Fig.\ref{fig:1a}(a) illustrates that this lower bound is the dominant factor determining the overall range of computation speed attainable using CRNs, and Fig. \ref{fig:1a}(b) shows the consistency between the true convergence behavior and our estimation of the convergence rate lower bound.

\begin{figure*}
  		\centering
  		\includegraphics[scale=0.87]{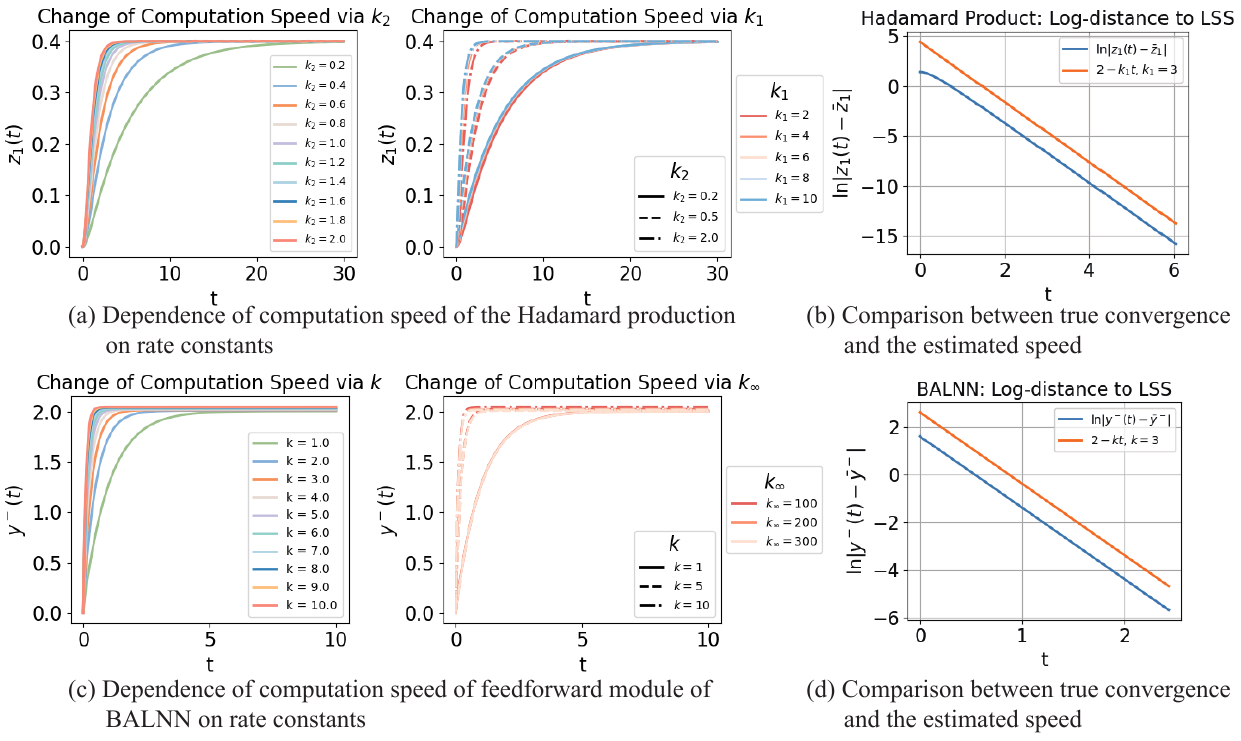}
  	 	\caption{Computation speed characterization for the Hadamard module and the BALNN feedforward module with $\rho_1 = k_2$ in (a) and $\rho_2 = k$ in (c). In (b) and (d), orange lines represent theoretical lower bounds $\rho = \vert \Re (\lambda_1) \vert$}, and the blue lines show the true convergence of \eqref{eq:hadamard} and \eqref{eq:balnn}. \label{fig:1a}
\end{figure*}

\subsection{Biochemical Adaptive Linear Neural Networks}
We validate Theorem \ref{prop_nonlinear_rate} for the nonlinear MAS by estimating the computation speed of the feedforward part in the biochemical adaptive linear neural network (BALNN) \citep{fan2022towards}, where the input sample $x = (x_1, \ldots, x_n) \in \mathbb{R}^{1 \times n}$ and weight vector $w = (w_1, \ldots, w_n) \in \mathbb{R}^{1 \times n}$.
The feedforward CRN with automatic computation capability to compute $y = x \cdot w$ is governed by a nonlinear MAS as follows
\begin{equation}
    \begin{split}
    	   \frac{dy^{+}(t)}{dt} & = k_1p^{+}(t)-ky^{+}(t)-k_{\infty}y^{+}(t)y^{-}(t), \\
    	   \frac{dy^{-}(t)}{dt} &= k_2p^{-}(t)-ky^{-}(t)-k_{\infty}y^{+}(t)y^{-}(t),
    	   \end{split}
    	   \label{eq:balnn}
\end{equation}
where $y^{\pm}(t)$ are the concentration of two output species of the feedforward part, and $p^{\pm}(t)$ maintain constant here and carry the positive and negative part of the linear weighted sum, respectively. Under the assumption $k_{\infty} \gg k$, $\bar{y}^{+}$ stores the positive output of the BALNN when $p^{+}(0) > p^{-}(0)$, whereas $\bar{y}^{-}$ stores its negative output. Eigenvalues of the Jacobian matrix are obtained as $\{-k, -k- \frac{-k_{\infty} \vert k_1 p^+(0) - k_2 p^-(0)\vert}{k}\}$, and therefore the convergence rate of the feedforward part in the BALNN is at least $ \rho_2 = k$. Through the output species $Y^-$, Fig. \ref{fig:1a}(c) illustrates how the degradation rate $k$ of $Y^+, Y^-$ influences the convergence rate compared with $k_{\infty}$, and Fig. \ref{fig:1a}(d) demonstrates the validity of our theoretical results.

\section{Conclusion}


This work develops a rigorous and concise characterization of convergence rate lower bound for biochemical computational modules, showing that the computation speed is fundamentally bounded by the dominant eigenvalues of the Jacobian matrix. Our results demonstrate how this characterization provides a theoretical metric for evaluating finite time computational accuracy and guides structure and parameter designs to achieve desired precision within a shorter time. However, current characterizations are inapplicable to singular Jacobians with zero eigenvalues, which may arise from conservation laws. Additionally, while maximizing the dominant eigenvalue provides one approach to accelerate biochemical molecular computation, transient performance is still influenced by the remaining spectrum. Validation against experimental data and addressing these theoretical limitations remain subjects for future research.




\bibliography{ifacconf}     

\appendix
 \section*{Appendix}
The following lemmas provide the technical foundations for the proof of Theorem \ref{prop_nonlinear_rate}.
\begin{lem}\label{lem_scale}
For one polynomial function $ h(x) \in \mathbb{C}^{n} $ with $x \in \mathbb{C}^n$, consisting of monomial items of degree $d \geq 2$, there exist $c >0$ and a fixed $p > 1$ such that $$\Vert h(x)  \Vert \leq  c \Vert x \Vert^{p}$$ when $\Vert x \Vert \leq 1$.    
\end{lem}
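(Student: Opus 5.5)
The plan is to take $p=2$ and reduce the estimate to a bound on each monomial. Write each component as
\[
h_i(x)=\sum_{|\beta|\ge 2} a_{i,\beta}\,x^{\beta}, \qquad i=1,\dots,n,
\]
where $x^{\beta}=\prod_{j} x_j^{\beta_j}$. Because $h$ is a polynomial, each sum is finite. I read the statement as saying that every monomial has total degree $|\beta|\ge 2$, which is exactly the situation of $\tilde h$ in \eqref{linear_zero_bcm}.

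First, bound a single monomial. Since $|x_j|\le \Vert x\Vert$ for every $j$, we get $|x^{\beta}|\le \Vert x\Vert^{|\beta|}$. When $\Vert x\Vert\le 1$ and $|\beta|\ge 2$, this gives $\Vert x\Vert^{|\beta|}\le \Vert x\Vert^{2}$. This is the only place the restriction $\Vert x\Vert\le 1$ enters: it lets all higher powers be absorbed into the quadratic one.

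Second, sum over monomials and components. By the triangle inequality,
\[
|h_i(x)|\le \Big(\sum_{\beta}|a_{i,\beta}|\Big)\Vert x\Vert^{2}.
\]
Then $\Vert h(x)\Vert\le \Vert h(x)\Vert_1=\sum_i |h_i(x)|\le c\,\Vert x\Vert^{2}$ with
\[
c=\sum_{i}\sum_{\beta}|a_{i,\beta}|.
\]
If all coefficients vanish, $h\equiv 0$ and any $c>0$ works, so we may take $c>0$. This gives the claim with $p=2>1$.

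There is no real obstacle here. The one point to get right is that $p$ must be the minimal degree, $2$, and not the maximal degree: with $\Vert x\Vert\le 1$, higher powers are smaller than lower ones. The constant $c$ depends only on the coefficients of $h$, which matters in the proof of Theorem \ref{prop_nonlinear_rate}. There the lemma is applied to $P_\epsilon^{-1}\tilde h(P_\epsilon z)$, whose coefficients depend on $\epsilon$ but which is still a polynomial with monomials of degree at least $2$ in $z$. The argument needs $\Vert z\Vert\le 1$ eventually, which follows from asymptotic stability of $\bar x$.
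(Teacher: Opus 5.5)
Your proof is correct, and it takes a more direct route than the paper's.

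The paper bounds each monomial with AM--GM, $\bigl|\prod_{i} x_i^{b_{ij}}\bigr| \le \bigl(\tfrac{1}{n}\sum_i |x_i|^{b_{ij}}\bigr)^n \le \Vert x\Vert^{n b_j}$ with $b_j=\min_i b_{ij}$, and then takes $p=nb$. In effect it controls a monomial by the smallest exponent among all $n$ variables, not by its total degree. You bound each factor by $|x_j|\le\Vert x\Vert$ and keep the total degree $|\beta|\ge 2$. Since $n\min_i b_{ij}\le\sum_i b_{ij}=|\beta|$, the AM--GM bound is never sharper than yours.

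The difference matters. The paper's exponent $n\min_i b_{ij}$ is $0$ whenever a monomial omits some variable, for example $h(x)=(x_1^2,0)$ with $n=2$. So its claim that $p>1$ whenever $n\ge 2$ does not follow from its argument, whereas yours has no exception. Your constant $c=\sum_{i,\beta}|a_{i,\beta}|$ is also the right one. The paper's $a_j=n\max_k a_{kj}$ should instead count the $m_j$ monomials and use absolute values.

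Finally, your $p=2$ does not depend on the monomial structure, so it survives the change of variables $z=P_\epsilon^{-1}\tilde x$ in Theorem \ref{prop_nonlinear_rate} unchanged. Only $c$ then depends on $\epsilon$, and the exponent $\alpha=\tfrac{3}{2}$ stays fixed.
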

\begin{pf}
Without loss of generality, we focus on the $2$-norm here. Each scalar element of $h(x)$ is denoted as $h_j(x) = \sum^{m_j}_{k=1} a_{kj} \prod^{n}_{i=1} x^{b_{ij}}_{i}$
with $b_{ij}$ are nonnegative integers. Then, combining the Arithmetic Mean - Geometric Mean inequality, one can have 
\begin{align*}
\vert \prod^{n}_{i=1} x^{b_{ij}}_{i} \vert & \leq (\frac{\sum^{n}_{i=1} \vert x_i \vert^{b_{ij}}}{n})^n \leq (\frac{\sum^{n}_{i=1} \Vert x \Vert^{b_{ij}}}{n})^n\\
& \leq (\frac{n \Vert x \Vert^{b_{j}}}{n})^n = \Vert x \Vert^{nb_j}
\end{align*}
where $b_j = \mathrm{min}_{i}\{b_{ij}\}$ due to $\Vert x \Vert \leq 1$. Further, $\vert h_j(x) \vert \leq a_j \Vert x \Vert ^{nb_j}$ with $a_j = n \mathrm{max}_k \{a_{kj}\}$. With that, we can derive 
\begin{align*}
\Vert h(x) \Vert & = (\sum^{n}_{j=1}\vert h_j(x) \vert^2 )^{\frac{1}{2}} \leq (\sum^n_{j=1} a^2_j \Vert x \Vert ^{2 nb_j})^{\frac{1}{2}}\\
& \leq ( n a^2 \Vert x \Vert^{2nb})^{\frac{1}{2}} = \sqrt{n} \vert a \vert \Vert x \Vert ^{nb},
\end{align*}
where $a = \mathrm{max}_{j}\{a_j\}$ and $b = \mathrm{min}_{j} \{b_j\}$. Therefore, with $c=\sqrt{n} \vert a \vert$ and $p=nb$, it can be obtained that $p > 1$ since $b \geq 2$ when $n=1$ and $p > 1$ whenever $n \geq 2$.  $\hfill \Box$
\end{pf}


\begin{lem}\label{lem_appro}
Given $A \in \mathbb{C}^{n \times n}$, for every $\epsilon > 0$, there exists a diagonalizable matrix $B \in \mathbb{C}^{n \times n}$ such that $\Vert A-B\Vert < \epsilon$. Moreover, let $\lambda_i$ and $\lambda^{\prime}_i$ denote the eigenvalues of $A$ and $B$, respectively, we have $\vert \lambda_i - \lambda^{\prime}_i \vert < \epsilon$, $i = 1, \cdots, n$. 
\end{lem}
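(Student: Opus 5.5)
The statement to prove is Lemma \ref{lem_appro}: a density result for diagonalizable matrices, plus eigenvalue closeness. The plan is to use the Schur decomposition rather than the Jordan form, because the unitary change of basis preserves the $2$-norm exactly. That gives direct control of $\Vert A-B\Vert$ and of the eigenvalue perturbation at the same time.

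\textbf{Step 1: triangularize.} By Schur's theorem, write $A = U T U^{\mathrm{H}}$ with $U$ unitary and $T$ upper triangular. The diagonal entries $T_{ii}=\lambda_i$ are the eigenvalues of $A$, listed with multiplicity and in any chosen order.

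\textbf{Step 2: perturb the diagonal.} Choose complex numbers $\delta_1,\dots,\delta_n$ with $\vert \delta_i\vert < \epsilon/\sqrt{n}$ such that the values $\lambda'_i := \lambda_i+\delta_i$ are pairwise distinct. This is always possible by choosing them successively: at step $i$ only finitely many values of $\delta_i$ are forbidden, while the admissible disc is uncountable.

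\textbf{Step 3: define $B$.} Set $D=\operatorname{diag}(\delta_1,\dots,\delta_n)$ and $B = U(T+D)U^{\mathrm{H}}$. Since $T+D$ is upper triangular, its eigenvalues are its diagonal entries $\lambda'_i$. These are pairwise distinct, so $T+D$ has $n$ linearly independent eigenvectors and is diagonalizable. Hence $B$, being unitarily similar to it, is diagonalizable as well.

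\textbf{Step 4: norm and eigenvalue bounds.} Unitary invariance gives
\[
\Vert A-B\Vert = \Vert D\Vert = \max_i \vert\delta_i\vert < \epsilon .
\]
This holds for the spectral norm, and the Frobenius norm $\big(\sum_i \vert\delta_i\vert^2\big)^{1/2}$ is also below $\epsilon$, which is why the scaling $\epsilon/\sqrt{n}$ was chosen. Under the natural pairing $\lambda_i \leftrightarrow \lambda'_i$ coming from the shared diagonal position, we get $\vert\lambda_i-\lambda'_i\vert = \vert\delta_i\vert < \epsilon$ directly.

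\textbf{Main obstacle.} The delicate point is the phrase ``the eigenvalues'' in the statement, which requires a specific matching between the two spectra. For a general matrix $B$ with $\Vert A-B\Vert<\epsilon$, the eigenvalues can move by as much as order $\epsilon^{1/d}$, where $d$ is the size of the largest Jordan block. So the eigenvalue claim is false for an arbitrary approximant and holds only for the constructed $B$, with the Schur-diagonal pairing.

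I will state explicitly that the lemma asserts existence of \emph{some} such $B$, and that the construction delivers both properties at once. In particular, $\max_i \Re\lambda'_i$ lies within $\epsilon$ of $\Re\lambda_1$, which is exactly what the proof of Theorem \ref{prop_nonlinear_rate} uses.
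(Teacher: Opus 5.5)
Your proof is correct and follows the paper's argument essentially step for step. Both use Schur triangularization $A=UTU^{\mathrm{H}}$, perturb the diagonal by a small $D$ so that $T+D$ has distinct diagonal entries, set $B=U(T+D)U^{\mathrm{H}}$, use unitary invariance to get $\Vert A-B\Vert=\Vert D\Vert<\epsilon$, and pair eigenvalues by diagonal position. Your additions only make explicit what the paper asserts without detail: the finitely-many-forbidden-values argument for distinctness, the $\epsilon/\sqrt{n}$ scaling that also covers the Frobenius norm, and the fact that $\vert\max_i\Re\lambda'_i-\Re\lambda_1\vert<\epsilon$ follows directly from the pairing, which is all the theorem needs and makes the index-preserving choice in Remark 1 unnecessary.
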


\begin{pf}
Considering the Schur decomposition, i.e., for $A \in \mathbb{C}^{n \times n}$, there exists a unitary matrix $U$ such that $U^{\mathrm{H}} A U = T$ where $T$ is an upper triangular matrix with the diagonal elements $\lambda_1, \cdots, \lambda_n$. For every $\epsilon > 0$, one can construct a perturbation matrix $D = \mathrm{diag} (d_1,\cdots,d_n)$ such that $\vert d_i \vert < \epsilon$ and eigenvalues of $T+D$ are distinct. It implies that there exists an invertible matrix $P$ such that 
\begin{equation*}
\begin{aligned}
   P^{-1} (T+D) P & = \mathrm{diag}(\lambda_1+d_1, \cdots, \lambda_n+d_n)\\
   & \triangleq \mathrm{diag} (\lambda^{\prime}_1, \cdots, \lambda^{\prime}_n) \triangleq \Lambda. 
\end{aligned}
\end{equation*}
Therefore, let $B =U (T+D) U^{\mathrm{H}}$, one can find an invertible matrix $UP$ satisfying $ (P^{-1} U^{\mathrm{H}})B (UP) =  \Lambda $, i.e., $B$ is diagonalizable.

Then, we estimate that 
\begin{equation*}
    \begin{aligned}
        \Vert B-A \Vert & = \Vert U D U^{\mathrm{H}} \Vert = \sqrt{\lambda_{\mathrm{max}} \left[ UD^{\mathrm{H}}DU^{\mathrm{H}} \right] } = \Vert D \Vert < \epsilon.
    \end{aligned}
\end{equation*}
Due to the construction, $\vert \lambda_i-\lambda^{\prime}_i \vert < \epsilon$ holds naturally. $\hfill \Box$
\end{pf}

\subsubsection{Remark 1.}
\hspace{-1em} Although perturbing eigenvalues may alter their ordering, for any $\epsilon > 0$, one can always choose perturbations such that the dominant eigenvalue index is preserved. Specifically, suppose that eigenvalues are ordered as $\Re \lambda_n \leq \Re \lambda_{n-1} \leq \cdots \leq  \Re \lambda_1$, and the first $k (k < n)$ ones share the same largest negative real part. To find the appropriate perturbations, considering $\lambda_i \in \{\lambda_1,\cdots,\lambda_k\}$, one can set $d_i = \frac{\epsilon_i}{2}e^{j\theta_i}$ with $\theta_1 \in \left[-\frac{\pi}{2}, \frac{ \pi}{2}\right]$, $\epsilon_1=\epsilon$, $\theta_i = \frac{\pi}{2} + \frac{(i-1)\pi}{k}$, $\epsilon_i = \mathrm{min}\{\epsilon, \frac{\Re \lambda_i - \Re \lambda_k}{2}\}$ for $i \in \{2,\cdots,k\}$, and $j$ denotes the imaginary unit in complex numbers here. As for $\lambda_i \in\{\lambda_{k+1},\cdots,\lambda_n\}$, it is only necessary to ensure that $ |d_i| < \min\{\epsilon, \frac{\Re \lambda_1 - \Re \lambda_i}{2}\} $ and that all perturbed eigenvalues remain distinct. 
\end{document}